\newcommand{\bt}{\begin{Theorem}}
\newcommand{\et}{\end{Theorem}}
\newcommand{\bi}{\begin{itemize}}
\newcommand{\ei}{\end{itemize}}
\newcommand{\bea}{\begin{eqnarray}}
\newcommand{\ba}{\begin{array}}
\newcommand{\eea}{\end{eqnarray}}
\newcommand{\ea}{\end{array}}
\newtheorem{Definition}{Definition}[section]
\newtheorem{theorem}[Definition]{Theorem}
\newtheorem{Lemma}[Definition]{Lemma}
\newtheorem*{theoremA*}{Theorem A}
\newtheorem*{theoremB*}{Theorem B}
\newtheorem*{Proofofmainthm*}{Proof of main theorem}
\newcommand{\be}{\begin{equation}}
\newcommand{\ee}{\end{equation}}
\newcommand{\R}{\mathbb R}%
\newcommand{\C}{\mathbb C}%
\newcommand{\X}{\mathbb X}%
\begin{document}
\baselineskip16pt

\author[Pratyoosh Kumar and Mannali Sajjan]{Pratyoosh Kumar and Manali Sajjan }
\address{Department of Mathematics, Indian Institute of Technology Guwahati, 781039, India.
E-mail: pratyoosh@iitg.ac.in and smanali@iitg.ac.in}

\title[Regularity of Solution of the Schr\"odinger Equation ]
{Regularity of Solution of the Schr\"odinger Equation on Symmetric Space}
\subjclass[2000]{Primary 35J10, 43A85 Secondary 43A90, 22E30}
\keywords{Schr\"odinger Equation, non-compact symmetric space, spherical function}
\thanks{Second Author is supported by Institute fellowships of IIT Guwahati.}

\begin{abstract} 
In this article, we investigate the behavior of solutions \( u(x,t) \) to the fractional Schrödinger equation on rank symmetric spaces of non-compact type. We proved that as time \( t \) approaches $0$, then $u(x,t)$ converges pointwise almost everywhere to the initial radial data \( f \), provided that \( f \in H^s(\mathbb{X}) \) with \( s > \frac{1}{2} \). This result extends Sjölin's results in this setting.

\end{abstract}

\maketitle

\section{Introduction}
Let $u(x,t)$ be the solution of the Schr\"odinger equation
\begin{equation}
\left\{\begin{array}{l}
\displaystyle i\frac{\partial u(x,t)}{\partial t} = \frac{\partial^2 u(x,t)}{\partial x^2}, \qquad \;(t,x)\in \mathbb{R}\times {\mathbb{R}}^n, \\ [12pt]
u(x,0)=f(x).
\end{array}\right.  \label{1}
\end{equation}
By taking the Fourier transform on both sides with respect to $x$-variable we have 
\[u(x,t)=S_tf(x)=\int_{0}^{\infty} e^{ix\xi}e^{it{\xi}^2}\widehat{f}(\xi)\; d\xi.\] 
One can view $S_t$ as a family of linear operators on $\strut L^2({\mathbb{R}}^n)$ such that \[\widehat{S_t f}(\xi)=e^{it|\xi|^2}\hat{f}(\xi),\] 
and by Plancheral theorem 
\[\displaystyle\lim_{t\rightarrow 0} S_tf=f,\] 
in $\strut L^2$-norm. In \cite{C}, Carleson posed  a question regarding the amount of regularity required to the initial data $f$ such that $S_tf(x) \rightarrow f \;a.e.\; x \in \mathbb{R}^n$ as $t$ goes to \(0\). When \(n=1\), Carleson proved that if $f \in H^s(\mathbb{R})$, $s\geq \frac14$ and support of $f$ is compact, then $S_tf(x) \rightarrow f\; a.e.\; x \in \mathbb{R}^n$, as $t$ goes to $0$. In \cite{DK}, Dahlberg and Kening proved that the condition $s\geq \frac14$ is sharp. This result has been further improved by many authors, including Sj\"olin, Bourgain, Moyua, Vargas, Vega, and Lee, in higher dimensions (see \cite{BO, DU, Lee, Mo} for details), leading to a sharp result. The similar results were given  in \cite{WZ}, where Xing Wang and Chunjie Zhang addressed the same problem in a manifold setting and obtained results analogous to those in Euclidean space.

 In this paper, we are interested in the fractional case of this problem in rank-one symmetric space of non-compact type with radial initial data. Our result is an extension of Sj\"olin's result on $\R^n$. 
 
 In \cite{Sj}, Sj\"olin proved that for $n>2$, if  $f \in H^s({\mathbb{R}}^n)$, $s> \frac{1}{2}$ then $S_tf(x) \rightarrow f\; a.e.\; x\in \mathbb{R}^n$, as $t$ goes to $0,$ where $u(x,t)=S_tf(x)$ is the solution of the  fractional Schr\"odinger equation 
\begin{equation}
\left\{\begin{array}{l}
\displaystyle i\frac{\partial u(x,t)}{\partial t} = {\Delta}^{\frac{a}{2}} u(x,t), \qquad (t,x)\in \mathbb{R}\times {\mathbb{R}}^n,a>1 , \\ [12pt]
u(x,0)=f(x).
\end{array}\right. \label{2}
\end{equation}
 
Here we also generalize the definition of  $S_tf(x)=\displaystyle\int_{0}^{\infty} e^{ix\xi}e^{it|\xi|^a}\widehat{f}(\xi) d\xi$. Notably, when $a=2,$ this reduces to the usual Schr\"odinger equation discussed earlier. We define 
\be S^*f(x)=\displaystyle\sup_{0<t<1} |S_tf(x)|,\label{maximaldefinition}\ee and in \cite{Sj}, Sj\"olin proved the following result:
\begin{theorem}\label{sj}
	If $n\geq 3$ and $a>1,$ then $\|S^*f\|_{\strut L^2(B)} \leq c \|f\|_{\strut H^s(\mathbb{R}^n)}$ holds for $s>1/2$.
	\end{theorem}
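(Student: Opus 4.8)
The plan is to dominate the time‑maximal operator by space--time $L^2$ norms of $S_tf$ and of its time derivative, and then to recover the missing half‑derivative from the local smoothing property of the dispersive flow. First, for fixed $x$ I apply the one‑dimensional Gagliardo--Nirenberg inequality on $(0,1)$ to $g_x(t):=S_tf(x)$, which gives $\sup_{0<t<1}|g_x(t)|^2\lesssim I(x)+I(x)^{1/2}J(x)^{1/2}$, where $I(x)=\int_0^1|S_tf(x)|^2\,dt$ and $J(x)=\int_0^1|\partial_tS_tf(x)|^2\,dt$. Integrating over $B$ and using the Cauchy--Schwarz inequality in $x$ reduces the theorem to the two space--time bounds for $\int_B I$ and $\int_B J$. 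Since $\partial_tS_tf=S_tg$ with $\widehat g(\xi)=i|\xi|^a\widehat f(\xi)$, the quantity $\int_B J$ is nothing but $\int_0^1\int_B|S_tg|^2$, i.e.\ the same expression as $\int_B I$ but with $a$ extra derivatives placed on the data.

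Second, I would decompose $f=\sum_j f_j$ into Littlewood--Paley pieces with $\widehat{f_j}$ supported in $|\xi|\sim\lambda_j=2^j$, and establish for each block the local smoothing estimate $\int_0^1\int_B|S_tf_j|^2\lesssim \lambda_j^{-(a-1)}\|f_j\|_{L^2}^2$; that is, on a fixed ball and unit time interval the solution gains $(a-1)/2$ derivatives over the data. Applying this to $f_j$ gives $\int_B I_j\lesssim \lambda_j^{-(a-1)}\|f_j\|_{L^2}^2$, while applying it to $g_j$ (which has $\|g_j\|_{L^2}\sim\lambda_j^{a}\|f_j\|_{L^2}$) gives $\int_B J_j\lesssim \lambda_j^{-(a-1)}\lambda_j^{2a}\|f_j\|_{L^2}^2=\lambda_j^{a+1}\|f_j\|_{L^2}^2$. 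Feeding these into the inequality of the first step yields, for each block, $\|S^*f_j\|_{L^2(B)}^2\lesssim \lambda_j^{-(a-1)}\|f_j\|_{L^2}^2+\lambda_j\|f_j\|_{L^2}^2\lesssim \lambda_j\|f_j\|_{L^2}^2$, so that $\|S^*f_j\|_{L^2(B)}\lesssim \lambda_j^{1/2}\|f_j\|_{L^2}$. The decisive feature is that the cross term $I_j^{1/2}J_j^{1/2}$ produces the exponent $\tfrac12\big(-(a-1)+(a+1)\big)=1$ independently of $a$, which is exactly what matches the threshold $s=\tfrac12$.

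Finally, since $S^*$ is sublinear we have $S^*f\le\sum_j S^*f_j$, and summing the block bounds gives $\|S^*f\|_{L^2(B)}\lesssim\sum_j \lambda_j^{1/2}\|f_j\|_{L^2}=\sum_j 2^{j(\tfrac12-s)}\,2^{js}\|f_j\|_{L^2}$. By Cauchy--Schwarz this is controlled by $\big(\sum_j 2^{2j(\tfrac12-s)}\big)^{1/2}\|f\|_{H^s}$, and the geometric series converges precisely when $s>\tfrac12$, giving the claim (the finitely many low‑frequency pieces $\lambda_j\lesssim1$ are handled trivially by $\|S^*f_j\|_{L^2(B)}\lesssim\|\widehat{f_j}\|_{L^1}\lesssim\|f_j\|_{L^2}$). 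I expect the main obstacle to be the local smoothing estimate of the second step: the naive global Plancherel identity $\|S_tf\|_{L^2(\R^n)}=\|f\|_{L^2}$ is too lossy and would only yield $s>a/2$, so one must genuinely exploit the oscillation of the phase $e^{it|\xi|^a}$ together with the spatial localization to $B$. This is where $a>1$ is used, since $\frac{d}{dr}r^a=a r^{a-1}\neq0$ makes the radial phase non‑stationary and drives the derivative gain; the dimensional hypothesis $n\ge3$ is expected to enter here as well, through the decay of the relevant kernels needed to make the Schur‑type and oscillatory‑integral estimates behind local smoothing converge.
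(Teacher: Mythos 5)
Your reduction is architecturally sound and in fact mirrors the paper's own strategy for the symmetric-space analogue (Theorem B): your Gagliardo--Nirenberg inequality $\sup_{0<t<1}|g(t)|^2\lesssim I+I^{1/2}J^{1/2}$ is exactly the elementary form of the paper's interpolation between $\|Sf\|_{L^2(H^0)}\lesssim\|f\|_{H^{-s}}$ and $\|Sf\|_{L^2(H^1)}\lesssim\|f\|_{H^{-s+a}}$ followed by the one-dimensional embedding $H^{1/2+\epsilon}(\R)\hookrightarrow L^\infty$ in the time variable, and your Littlewood--Paley blocks simply diagonalize the resulting estimate at $r=1/2$. The exponent bookkeeping is correct: the cross term gives $\frac12\bigl(-(a-1)+(a+1)\bigr)=1$ independently of $a$, and the Cauchy--Schwarz summation over dyadic blocks converges precisely for $s>1/2$.

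The problem is that everything you actually establish is soft (sublinearity, Cauchy--Schwarz, Plancherel), while the entire analytic content of the theorem sits in the local smoothing estimate $\int_0^1\int_B|S_tf_j|^2\,dx\,dt\lesssim\lambda_j^{-(a-1)}\|f_j\|_{L^2}^2$, which you name as ``the main obstacle'' and then only motivate heuristically. Without it your argument yields only $s>a/2$, as you observe yourself, so what you have written is a reduction, not a proof. This is where all the work goes in the paper (and in Sj\"olin's original argument): one expands $\int\!\!\int|Sf|^2$ as a bilinear form whose kernel is, in the radial/symmetric-space formulation, $K(\lambda,\eta)=(\rho^2+|\lambda|)^s(\rho^2+|\eta|)^s\bigl(\int_{\X}\alpha\,\phi_\lambda\phi_\eta\bigr)\hat\psi\bigl((\eta^2+\rho^2)^{a/2}-(\lambda^2+\rho^2)^{a/2}\bigr)$ with $s=\frac{a-1}{2}$, and runs a Schur test. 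That test needs three quantitative inputs you have not supplied: the rapid decay of $\hat\psi$ to localize to the near-diagonal $|\lambda|^a\sim|\eta|^a$; the change of variables $u=(\lambda^2+\rho^2)^{a/2}$, whose Jacobian is where $a>1$ is genuinely used; and, on the near-diagonal piece (the term $I_2$ of the paper), the oscillatory decay $|\phi_\lambda(t)|\lesssim(\lambda t)^{-(n-1)/2}$ (Bessel asymptotics on $\R^n$) to cancel the growth $\lambda^{n-1}$ of the Plancherel measure --- this exponent balance is precisely where the hypothesis $n\ge3$ enters and where the threshold degrades in low dimension. Until you prove the block estimate $\int_0^1\int_B|S_tf_j|^2\lesssim\lambda_j^{-(a-1)}\|f_j\|_{L^2}^2$ by such an argument, the proposal has a hole exactly at the theorem's core.
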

In \cite{WZ} Wang and Zhang addressed the same problem for real hyperbolic space. They proved the similar bound like Theorem \ref{sj}, for $a=2$. 

In this present work, we proved the analogous result of Sj\"olin. Our result also extends the result of Wang and Zhang in for all rank one symmetric space  and for all $a>1$.
Now we formulate the problem in symmetric space. Let  $\mathbb{X}$ be a symmetric space and let 
$\Delta$ be the Laplace-Beltrami operator in $\mathbb{X}$. The fractional Schr\"odinger equation in  $\mathbb{X}$ is given by
 \begin{equation}
\left\{\begin{array}{l}
\displaystyle i\frac{\partial u(x,t)}{\partial t} = {\Delta}^{\frac{a}{2}} u(x,t), \qquad (t,x)\in \mathbb{R}\times \mathbb{X},~a>1, \\ [12pt]
u(x,0)=f(x), \qquad f ~\text{radial}.
\end{array}\right. \label{3}
\end{equation}
By taking Harish-Chandra transform in $x$-variable the solution $u(x,t)$ of equation \eqref{3} is given by $$u(x,t)=S_tf(x)=c \int_{0}^{\infty} \hat{f}(\lambda)e^{it(|\lambda|^2+|\rho|^2)^{\frac{a}{2}}} \phi_{\lambda}(x) |c(\lambda)|^{-2}d\lambda.$$
The meaning of these symbols will be explained later. Now we state the main result of this paper:
\begin{theoremA*}
	The solution $u(x,t)$ of the equation \eqref{3} converges pointwise to the initial data $f$, as $t$ goes to $0$, whenever $f \in  H^s(\mathbb{X})$, $s> \frac{1}{2} $, and also support of $f$ is compact. \label{TheoremA}
	\end{theoremA*}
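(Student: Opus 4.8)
The plan is to deduce the almost-everywhere convergence from a single maximal inequality and to prove that inequality by a dyadic decomposition in the spherical variable, combined with a Sobolev-in-time estimate and a local smoothing bound for the propagator $S_t$ on $\X$. As with Theorem \ref{sj}, the whole content is the maximal estimate
\be
\|S^*f\|_{L^2(B)}\le c\,\|f\|_{H^s(\X)},\qquad s>\tfrac12,
\ee
for every geodesic ball $B$, where $S^*$ is as in \eqref{maximaldefinition}. Indeed, for radial $f$ whose Harish-Chandra transform $\hat f$ is smooth and compactly supported, $S_tf\to f$ uniformly as $t\to0$ by dominated convergence, since $\phi_\la(x)$ is bounded on $B$ and $\hat f(\la)|c(\la)|^{-2}$ is integrable against the cutoff; such $f$ are dense in radial $H^s(\X)$, so the maximal estimate upgrades this to $S_tf(x)\to f(x)$ for a.e.\ $x$ and all $f\in H^s(\X)$ by the standard continuity (Stein) argument, which is Theorem A.

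To prove the maximal estimate I would split $\hat f=\sum_j\hat f_j$ with $\hat f_j$ supported in the shell $|\la|\sim 2^j=:R_j$ (the finitely many low frequencies $|\la|\lesssim1$ are harmless since $|\phi_\la(x)|\le1$). Because $S^*f\le\sum_j S^*f_j$, it suffices to prove the single-shell bound $\|S^*f_j\|_{L^2(B)}\le c\,R_j^{1/2}\|f_j\|_{L^2(\X)}$; then summing, using $\|f\|_{H^s}^2\sim\sum_j R_j^{2s}\|f_j\|_{L^2}^2$ together with Cauchy-Schwarz and $\sum_j R_j^{1-2s}<\infty$ for $s>\tfrac12$, recovers the full estimate. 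This is exactly where the exponent $1/2$ and the threshold $s>1/2$ enter.

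For the single-shell bound I would control the time supremum through the identity $\sup_{0<t<1}|S_tf_j(x)|^2\le|f_j(x)|^2+2\int_0^1|S_tf_j(x)|\,|\partial_tS_tf_j(x)|\,dt$, and after integrating over $B$ apply Cauchy-Schwarz in $(x,t)$, reducing everything to the two space-time quantities $\int_0^1\!\int_B|S_tf_j|^2$ and $\int_0^1\!\int_B|\partial_tS_tf_j|^2$. The essential ingredient is then a local smoothing estimate for $S_t$ on $\X$: for $g$ with $\hat g$ supported in $|\la|\sim R$,
\be
\int_0^1\!\int_B|S_tg(x)|^2\,dx\,dt\le c\,R^{-(a-1)}\|g\|_{L^2(\X)}^2 .
\ee
Granting this, the first quantity is $\lesssim R_j^{-(a-1)}\|f_j\|^2$, while the second, being $S_t$ applied to data with symbol $(\la^2+|\rho|^2)^{a/2}\sim R_j^{a}$, is $\lesssim R_j^{-(a-1)}R_j^{2a}\|f_j\|^2=R_j^{a+1}\|f_j\|^2$; the product of their square roots is $R_j\|f_j\|^2$, which delivers precisely $\|S^*f_j\|_{L^2(B)}\lesssim R_j^{1/2}\|f_j\|$.

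The main obstacle is the local smoothing estimate, where the spherical function $\phi_\la(x)$ replaces the flat exponential $e^{ix\xi}$. I would prove it by expanding $\int_0^1\!\int_B|S_tg|^2$ through the Plancherel formula: the $t$-integration produces the factor $\min\bigl(1,|(\la^2+|\rho|^2)^{a/2}-((\la')^2+|\rho|^2)^{a/2}|^{-1}\bigr)$, whose phase has $\la$-derivative of size $R^{a-1}$ on the shell and hence confines the double integral to $|\la-\la'|\lesssim R^{1-a}$; the remaining spatial pairing $\int_B\phi_\la(x)\overline{\phi_{\la'}(x)}\,dx$, weighted by the Plancherel density $|c(\la)|^{-2}|c(\la')|^{-2}$, must then be shown to decay suitably in $|\la-\la'|$. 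To estimate it I would insert the Harish-Chandra asymptotics $\phi_\la(r)\sim c(\la)e^{(i\la-\rho)r}+c(-\la)e^{(-i\la-\rho)r}$ in the radial variable $r$, together with the uniform bound $|\phi_\la(x)|\le\phi_0(x)$ and the rank-one asymptotics of $|c(\la)|^{-2}$; the exponential weight $e^{-\rho r}$ only helps on the fixed ball $B$. The technical crux — and the place where the rank-one, radial structure is genuinely used — is to verify that the $c$-function factors in the pairing $\phi_\la\overline{\phi_{\la'}}$ do not destroy the near-orthogonality, so that the spatial factor matches the Euclidean local smoothing and yields the gain $R^{-(a-1)}$.
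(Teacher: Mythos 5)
Your overall architecture is sound and genuinely different from the paper's. You reduce Theorem A to the maximal estimate exactly as the paper does (density of nice radial functions plus the standard Fatou/splitting argument), but you prove the maximal estimate by a Littlewood--Paley decomposition in $\lambda$, the pointwise identity $\sup_t|F(t)|^2\le |F(0)|^2+2\int_0^1|F||F'|\,dt$, and a frequency-localized local smoothing bound $\int_0^1\int_B|S_tg|^2\lesssim R^{-(a-1)}\|g\|_{L^2}^2$; the arithmetic ($R_j^{1/2}$ per shell, summable against $R_j^{-s}$ for $s>1/2$) is correct. The paper instead follows Sj\"olin's mixed-norm route: it proves $\|Sf\|_{L^2(H^0)}\lesssim\|f\|_{H^{-s}}$ and $\|Sf\|_{L^2(H^1)}\lesssim\|f\|_{H^{-s+a}}$ for a time-cutoff version of $S_t$, interpolates to $L^2_x H^r_t$, and applies the one-dimensional embedding $H^{1/2+\epsilon}(\R)\hookrightarrow L^\infty(\R)$ in the $t$-variable. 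Your route avoids the interpolation and the Sobolev embedding, at the cost of a dyadic summation; both are legitimate.

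However, there is a genuine gap, and it sits exactly where the paper does all of its real work. Your local smoothing estimate requires, after expanding $\int_0^1\int_B|S_tg|^2$ by Plancherel, a bound on the spatial pairing $\int_B\phi_\lambda(x)\overline{\phi_{\lambda'}(x)}\,dx$ strong enough to cancel the Plancherel densities $|c(\lambda)|^{-2}|c(\lambda')|^{-2}\sim(\lambda\lambda')^{n-1}$ on the shell $|\lambda|,|\lambda'|\sim R$. You describe this as ``the technical crux'' and leave it unverified. The required estimate is
\begin{equation*}
\Bigl|\int_B\phi_\lambda(x)\,\phi_{\lambda'}(x)\,dx\Bigr|\;\le\;C\,(\lambda\lambda')^{-\frac{n-1}{2}},
\end{equation*}
and it does not follow from $|\phi_\lambda|\le\phi_0$ or from the Harish-Chandra asymptotics alone: near the origin of $\X$ the series $\phi_\lambda(r)\sim c(\lambda)e^{(i\lambda-\rho)r}+c(-\lambda)e^{(-i\lambda-\rho)r}$ is not valid, and that is precisely the region where the density $D(r)\sim r^{n-1}$ degenerates. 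The paper obtains this bound by splitting the radial integral at $r=1$, using the Stanton--Tomas Bessel-type expansion $\phi_\lambda(\exp rH_0)=c_0[r^{n-1}/D(r)]^{1/2}\sum_m r^{2m}a_m(r)\mathcal{J}_{\frac{n-2}{2}+m}(\lambda r)$ to get $|\phi_\lambda(r)|\lesssim(\lambda r)^{-(n-1)/2}$ for $r\le1$, and the bound $|\phi_\lambda(r)|\lesssim|c(\lambda)|$ from Ionescu's estimates for $r\ge1$; without supplying this lemma your local smoothing estimate, and hence the single-shell maximal bound, is unproven. A secondary, fixable inaccuracy: $\int_0^1e^{it(\mu-\mu')}dt$ does not ``confine'' the integral to $|\mu-\mu'|\lesssim1$; it only yields $\min(1,|\mu-\mu'|^{-1})$, which in the Schur test over the shell produces an extra factor of $\log R$. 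This is harmless for $s>1/2$, but you should either track it or insert a smooth time cutoff (as the paper does with $\psi_0$) to replace $\min(1,|\mu-\mu'|^{-1})$ by the rapidly decreasing $\hat\psi(\mu-\mu')$.
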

The standard method for proving pointwise convergence involves obtaining an estimate for the corresponding maximal operator. We will follow this approach as well. Our proof will align with Sj\"olin's ideas; to establish some mixed norm estimates and by using interpolation and inclusion results from Sobolev spaces, derive the required estimate for the corresponding maximal operator. Our proof of Theorem A is mainly based on the following maximal estimate:

 \begin{theoremB*}\label{Theorem B}
     If $a>1$, and $B$ is ball of any arbitrary radius in $\mathbb{X}$, then 
     \be \|S^*f\|_{\strut L^2(B)} \leq c \|f\|_{\strut H^s(\mathbb{X})},\label{maximalestimate}\ee holds for $s>1/2$, Where $S^*f$ is same as previously defined in \eqref{maximaldefinition}  and $f \in C_c^{\infty}(K \backslash  G/K).$
 \end{theoremB*}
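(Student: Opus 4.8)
The plan is to dominate the maximal operator by a mixed space--time norm through a one--dimensional Sobolev embedding in the time variable, to split the spectral integral dyadically, and on each dyadic block to recover the missing derivatives from a local smoothing estimate produced by the convexity of the symbol $\omega(\lambda)=(|\lambda|^2+|\rho|^2)^{a/2}$. Fix $\psi\in C_c^\infty(\mathbb{R})$ with $\psi\equiv 1$ on $(0,1)$ and recall the elementary embedding $\|v\|_{L^\infty_t}\le C\|\psi v\|_{H^\beta_t(\mathbb{R})}$ for $\beta>1/2$. Decompose $f=\sum_{j\ge 0}f_j$ by a Littlewood--Paley partition adapted to the spectral variable, with $\widehat{f_0}$ supported in $\{\lambda\le 1\}$ and $\widehat{f_j}$ supported in $\{\lambda\sim 2^j\}$ for $j\ge 1$. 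Since $\|f\|_{H^s(\mathbb{X})}^2\sim\sum_{j\ge0}2^{2js}\|f_j\|_{L^2(\mathbb{X})}^2$ and $S^*$ is sublinear, it suffices to prove the single--block estimate
\[
\|S^*f_j\|_{L^2(B)}\le C\,2^{\,j\left(1/2+a(\beta-1/2)\right)}\,\|f_j\|_{L^2(\mathbb{X})},\qquad j\ge 1,
\]
together with the trivial low--frequency bound $\|S^*f_0\|_{L^2(B)}\lesssim\|f_0\|_{L^2(\mathbb{X})}$, where $\omega$, $\phi_\lambda$ and $|c(\lambda)|^{-2}$ are all bounded and Cauchy--Schwarz in $\lambda$ closes the argument. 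Choosing $\beta>1/2$ close enough to $1/2$ and summing in $j$ against $\|f\|_{H^s}^2$ then yields the theorem for every $s>1/2$.

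For the single--block estimate I would first apply the embedding to $v=S_\cdot f_j(x)$ and integrate over $x\in B$. Since $S_tf_j$ is band--limited in $t$ with frequencies $\omega(\lambda)\sim 2^{aj}$, up to the rapid decay of $\widehat\psi$ this gives $\|S^*f_j\|_{L^2(B)}\lesssim 2^{\,aj\beta}\,\|S_\cdot f_j\|_{L^2(B\times(0,1))}$. Everything thus reduces to the local smoothing bound
\[
\|S_\cdot f_j\|_{L^2(B\times(0,1))}\;\lesssim\;2^{-j(a-1)/2}\,\|f_j\|_{L^2(\mathbb{X})},
\]
which is the heart of the matter: globally in space $S_t$ is unitary and gives no gain whatsoever, so the factor $2^{-j(a-1)/2}$ must be extracted from the localisation to the ball $B$ together with the non--vanishing second derivative of $\omega$. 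Granting it, $2^{aj\beta}2^{-j(a-1)/2}=2^{j(1/2+a(\beta-1/2))}$, exactly the exponent needed above.

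To prove the local smoothing bound I would pass to the radial picture. Writing $r$ for the geodesic distance and using the Harish--Chandra expansion of the rank--one spherical function, $\phi_\lambda(r)=e^{-\rho r}\big(c(\lambda)e^{i\lambda r}+c(-\lambda)e^{-i\lambda r}\big)+E_\lambda(r)$ with a faster--decaying error $E_\lambda$, and absorbing one factor $|c(\lambda)|^{-1}\sim\lambda^{(n-1)/2}$ into the amplitude, the block $S_tf_j$ becomes (modulo the bounded factor $e^{-\rho r}$ and the error terms) a sum of two one--dimensional oscillatory integrals
\[
I_\pm(t,r)=\int_{\lambda\sim 2^j} G_\pm(\lambda)\,e^{\,i\,(t\,\omega(\lambda)\pm\lambda r)}\,d\lambda,\qquad \|G_\pm\|_{L^2}\sim\|f_j\|_{L^2(\mathbb{X})},
\]
where $|G_\pm(\lambda)|^2=|\widehat{f_j}(\lambda)|^2|c(\lambda)|^{-2}$ recovers the Plancherel density exactly. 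A $TT^*$ computation on $I\times(0,1)$, with $I$ the radial projection of $B$, reduces the bound to a Schur estimate for the kernel $\int_0^1 e^{it(\omega(\lambda)-\omega(\mu))}\,dt\cdot\widehat{\mathbf 1_I}(\mu-\lambda)$; since $\omega(\lambda)-\omega(\mu)\sim 2^{j(a-1)}(\lambda-\mu)$ for $\lambda,\mu\sim 2^j$, stationary and non--stationary phase give $\|T^*T\|\lesssim 2^{-j(a-1)}$, i.e. the factor $2^{-j(a-1)/2}$ above (up to a harmless logarithm absorbed into $\beta$). This is precisely where $a>1$ enters, as dispersion.

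The main obstacle will be the rigorous reduction to the integrals $I_\pm$ \emph{uniformly over the whole ball}. The Harish--Chandra asymptotics are a large--$r$ statement, whereas $B$ contains the origin; near $r=0$ the spherical functions behave like Bessel functions, and one must invoke uniform pointwise and derivative estimates for $\phi_\lambda(r)$ valid for all $r$ in the bounded range and all $\lambda>0$, and check that both $E_\lambda$ and the near--origin region contribute terms with at least the decay of the main term. The second technical point is the control of the $c$--function, namely its lower bound and symbol--type bounds on its derivatives, so that the substitution $|c(\lambda)|^{-2}d\lambda\mapsto G_\pm$ preserves the $L^2$ and $H^s$ norms; once these are secured, the $TT^*$/stationary--phase step is routine and the dyadic summation delivers $s>1/2$.
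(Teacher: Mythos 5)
Your proposal is correct in outline and reaches the same two pillars as the paper --- a local smoothing gain of $(a-1)/2$ derivatives on a spatially localized region, converted into a maximal bound by a one--dimensional Sobolev embedding in $t$ --- but it organizes them quite differently. The paper does not decompose dyadically: it proves the global estimates $\|Sf\|_{L^2(H^0)}\leq C\|f\|_{H^{-(a-1)/2}}$ and $\|Sf\|_{L^2(H^1)}\leq C\|f\|_{H^{-(a-1)/2+a}}$ for the cut-off operator $Sf(x,t)=\alpha_0(x)\psi_0(t)S_tf(x)$, interpolates to $L^2(H^{1/2+\epsilon})$, and only then applies $H^{1/2+\epsilon}(\R)\hookrightarrow L^\infty(\R)$; your Littlewood--Paley blocks plus the band-limited embedding $\|S^*f_j\|_{L^2(B)}\lesssim 2^{aj\beta}\|S_\cdot f_j\|_{L^2(B\times(0,1))}$ is the frequency-localized avatar of exactly that interpolation, and your single-block smoothing bound $2^{-j(a-1)/2}\|f_j\|_2$ is the paper's first $L^2$ estimate restricted to a block. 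For the smoothing estimate itself the paper also runs what is in substance a $TT^*$ argument, but it never invokes stationary phase: expanding $\|Sf\|_{L^2(dx\,dt)}^2$ produces the explicit kernel $K(\lambda,\eta)$ built from $\hat\psi\bigl((\eta^2+\rho^2)^{a/2}-(\lambda^2+\rho^2)^{a/2}\bigr)$ and $\int_{\X}\alpha\,\phi_\lambda\phi_\eta\,dx$, the rapid decay of $\hat\psi$ supplies all the off-diagonal decay in the time frequency, the product $\phi_\lambda\phi_\eta$ is controlled only through the pointwise bounds $|\phi_\lambda(t)|\lesssim\min\bigl(1,(\lambda t)^{-(n-1)/2}\bigr)$ (Bessel regime near the origin, Harish--Chandra regime for $t>1$), and a Schur test closes the argument. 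Your route extracts additional oscillation in $r$ via $\widehat{\mathbf 1_I}(\lambda-\mu)$, which is sharper than needed; the paper's buys simplicity by letting $\hat\psi$ do the work.

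One soft spot in your sketch: the reduction to the oscillatory integrals $I_\pm$ genuinely fails in the non-oscillatory regime $\lambda r\lesssim 1$ (i.e.\ $r\lesssim 2^{-j}$ on the block), where $\mathcal J_{(n-2)/2}(\lambda r)$ carries no phase and the crude $L^2$ bound over that small set yields only $\|f_j\|_2$ with no gain --- insufficient when $a$ is large. There you must still run the $TT^*$ argument in the $t$-variable alone, using $|\omega(\lambda)-\omega(\mu)|\sim 2^{j(a-1)}|\lambda-\mu|$ to recover the factor $2^{-j(a-1)/2}$ from time oscillation; this fits inside your framework but needs to be said. The paper sidesteps the issue because its kernel already contains $\hat\psi(\omega(\eta)-\omega(\lambda))$, so the time-frequency decay is available uniformly in $r$, including at the origin.
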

\section{Notation and Preliminaries}
In this section, we will discuss the basic facts of symmetric spaces and their spherical functions.  
Let $G$ be a connected 
noncompact semisimple Lie group with finite center, and $\mathfrak{g}$ be the Lie algebra of $G$. 
Let $\theta$ be a Cartan involution of $\mathfrak{g}$, and $\mathfrak{g}=\mathfrak{k}\oplus\mathfrak{p}$ be the 
associated Cartan decomposition. Let $K=\exp \;\mathfrak{k}$ be a maximal compact subgroup of $G$ and let $X=G/K$ 
be the associated Riemannian symmetric space. If $o=eK$ denotes the identity coset, then for $g\in G$ the quantity $r(g)$ denotes 
the Riemannian distance of the element $g.o$ from the identity element. Let $\mathfrak{a}$ be a maximal abelian subspace of 
$\mathfrak{p}$, $A=\exp \;\mathfrak{a}$ be the corresponding subgroup of $G,$ and $M$ the centralizer of $A$ in $K$. 

From this point onward, we will assume that the group $G$
has real rank one, which implies that dim $\mathfrak{a}=1$. Let $a_{\mathbb{R}}^*$ be the real dual of $a$ and for $\alpha \in a_{\mathbb{R}}^*,$ we define ${\mathfrak{g}}_{\alpha} = \{X \in \mathfrak{g} = [H,X] = \alpha(H) X$  $\text{for all} ~ H \in \mathfrak{a} \}$. We say $\alpha$ is nonzero if $\text{dim}_{\strut \mathbb{R}}{\mathfrak{g}}_{\alpha} > 0.$  In this case, it is well known that the set of nonzero roots is either of the 
form $\{-\alpha,\alpha\}$ or $\{-\alpha,-2\alpha,\alpha,2\alpha\}$. 

Let $\mathfrak{n}= \mathfrak{g}_\alpha\oplus\mathfrak{g}_{2\alpha}$ and $N=$exp $\mathfrak{n}$. Let $H_0$ be the unique element of $\mathfrak{a}$ such that $\alpha(H_0)=1$ and $A=\{a_s : a_s=\exp \;sH_0, s\in\R\}$. We identify $\mathfrak{a^*}$ (the dual of $\mathfrak{a}$) and $\mathfrak{a^*_{\C}}$ (the complex dual of $\mathfrak{a}$) by $\R$ and $\C$
via the identification $t\mapsto t\alpha$ and $z\mapsto z\alpha$, $t\in \R$ and $z\in \C$ respectively.

Let 
$m_1=\text{dim}\;\mathfrak{g}_\alpha$, $m_2=\text{dim}\;\mathfrak{g}_{2\alpha}$ and $\rho=\frac{1}{2}(m_1+2m_2)\alpha$ be the
half sum of positive roots. We will denote dim$\mathbb{X}$ by $n$,\;$n=m_1+m_2+1$. By abuse of notation we will denote $\rho(H_0)= \frac{1}{2}(m_1+2m_2)$ by $\rho$.

The functions defined on \( \mathbb{X} \) can be viewed as right \( K \)-invariant functions on \( G \). Furthermore, the radial functions on \( \mathbb{X} \) are \( K \)-biinvariant functions on \( G \).
Let $G=K\overline{A^+}K$ be the Cartan decomposition $G,$ where $\overline {A^+}=\{a_t\in A  : t\geq 0\}$. The Haar measure related to the Cartan decomposition is given by:
\be \int_G f(g)\; dg =C \int_K\int_0^\infty \int_K f(k_1a_tk_2)D(t)\;dk\;dt\;dk,\ee\label{cartan}
where $D(t)=(\sinh t)^{m_1}(\sinh 2t)^{m_2}.$

Let $\Delta$ denote the Laplace-Beltrami operator. The Spherical function $\phi_{\lambda}$ on $\mathbb{X}$ are radial eigenfunction of $\Delta$ such that $$\Delta \phi_{\lambda}=-({\lambda}^2+{\rho}^2)\phi_{\lambda} ,\;\;  \phi_{\lambda}(0)=1,\;\; \lambda \in \mathbb{C}.$$
In the Iwasawa decomposition of $G=KAN$, every $g\in G$ can be uniquely written as 
\be g=k(g)\;\exp H(g)\;n(g).\label{iwasawa}\ee
The integral expression of $\phi_{\lambda}$ is given by
$$\phi_{\lambda}(x)=\int_{K}e^{-(i\lambda+\rho)H(x^{-1}k)}dk.$$

For $\lambda \in \R$, the spherical Fourier transform of a suitable radial function $f$ on $\X$ is defined by

\be\widehat{f}(\lambda)=\int_{\X} f(x)\phi_{\lambda}(x) dx.\ee

We also need the following important formulas:
\be f(x)=c\int_{0}^{\infty} \widehat{f}(\lambda) \phi_{\lambda}(x)|c(\lambda)|^{-2} d\lambda. \;\;\;\;(\textbf{Inversion formula}) \ee

\be \|f\|^2_{L^2(X)}=\int_{0}^{\infty} |\widehat{f}(\lambda)|^2|c(\lambda)|^{-2}d\lambda, \quad \text{for}\; f \in C_c^{\infty}(K \backslash  G/K).\;\; (\textbf{Plancherel formula})\ee

The function $c(\lambda)$, the famous Harish-Chnadra $\bold c$-function. We also need the following well-known estimate of the Plancherel density, (see \cite{An}):
\be |c(\lambda)|^{-2} \leq |\lambda|^2 (1+|\lambda|)^{(n-3)} ,\;\; \forall \lambda \in \mathbb{R}.\label{clambda}\ee

\subsection{Some estimates of $\phi_{\lambda}$}
We need some estimates of the spherical function, which is crucial for the proof. These estimates we will derived from the series expansion of $\phi_\lambda$. Most of these estimates are given in \cite{Io, st}.

 It is follow from \cite[Theorem 2.1]{st}, that there exist $R_0,R_1(>1)$ such that for any $t$ with $0\leq t \leq R_0$ and any $M \geq 0$,
\be \phi_{\lambda}(\exp tH_0)=c_0\left[\frac{t^{n-1}}{D(t)}\right]^{\frac{1}{2}} \displaystyle\sum_{0}^{\infty} t^{2m} a_m(t) \mathcal{J}_{\frac{n-2}{2} +m}(\lambda t),\ee\label{spherical}

where,\; $\mathcal{J}_{\mu}(z)=\displaystyle \frac{J_{\mu}(z)}{z^{\mu}} \Gamma\left(\mu + \frac{1}{2}\right) \Gamma\left(\frac{1}{2}\right) 2^{\mu -1}$, and ${J_{\mu}(z)}$ is the standard Bessel function and $D(t)=(sht)^{m_1} (sh2t)^{m_2} .$ The above expression of $\phi_\lambda$
can also be written as:
\be \phi_{\lambda}(\exp tH_0)=c_0 \left[\frac{t^{n-1}}{D(t)}\right]^{\frac{1}{2}} \displaystyle\sum_{0}^{M} t^{2m} a_m(t) \mathcal{J}_{\frac{n-2}{2} +m}(\lambda t)+E_{M+1}(\lambda t),\label{spherical 1}\ee
where, $$a_0(t)=1 , \text{and}\;\; |a_m(t)|\leq cR_1^{-m}.$$

\begin{equation}
|E_{M+1}(\lambda t)|\leq
\left\{\begin{array}{l}
c_Mt^{2(M+1)}, \;\;\;\;\;\;\;\;\;\;\;\;\;\;\;\;\;\;\;\;\;\;\; \text{if}\;\;\; |\lambda t| \leq 1,\\ [8pt]
c_M t^{2(M+1) (\lambda t)^{-(\frac{n-1}{2} +M+1)}},\; \text{if}\;\;\; |\lambda t|>1.
\end{array}\right.\label{localestimate}
\end{equation}
The Harish-Chandra series for the Spherical function is given by:
\be \phi_{\lambda}(\exp \;tH_0)=c(\lambda)e^{(i\lambda - \rho)t}\varphi_{\lambda}(t)+c(-\lambda)e^{-(i\lambda - \rho)t} \varphi_{-\lambda}(t), \label{Harish-Chandraseries}\ee
where $\varphi_{\lambda}(t)=\sum_{n=0}^{\infty} \Gamma_n(\lambda)e^{-nt}$ and $c(\lambda)$ is the Harish-Chandra $\bold{c}$-function, which is given by,
$$c(\lambda)=\Gamma\left(\frac{m_1}{2}\right)\Gamma\left(\frac{m_2}{2}\right)\frac{\Gamma(i\lambda) \Gamma(\frac{m_1+i\lambda}{2})}{\Gamma(\frac{\rho}{2} + i\lambda) \Gamma(\frac{\rho+i\lambda}{2})}.$$
Using the notations used in \cite{Io} , we can write 
\[\phi_{\lambda}(t)=e^{-\rho t}(e^{i\lambda t} c(\lambda) a_2 (\lambda,t) + e^{-i \lambda t} c(- \lambda) a_2(-\lambda,t)).\]
Where the functions $a_2$ satisfies the inequality
\begin{equation}
\left|\frac{{\partial}^{\alpha}}{\partial {\lambda}^{\alpha}} \frac{{\partial}^l}{\partial t^l} a_2(\lambda,t)\right| \leq C(1+|\lambda|)^{-\alpha},\label{a_2estimate}
\end{equation}
for all integers $\alpha \in [0,N]$ and for all $t \geq 1$ and $\lambda \in \mathbb{R}$.
\begin{Lemma} Let $\lambda \in \R,$ the elementary spherical function $\phi_\lambda$ satifies the following properties:
\bi
\item[1.] $\phi_\lambda= \phi_{-\lambda},$
\item[2.] $|\phi_\lambda (x)|\leq 1,$ for all $x\in \X.$
 
\ei
\end{Lemma}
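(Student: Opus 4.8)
The plan is to establish the two properties separately from the representations of $\phi_\lambda$ recorded above, the organizing observation being that $\lambda\mapsto\phi_\lambda(x)$ is entire and even. For property (1), I would first read the evenness $\phi_\lambda=\phi_{-\lambda}$ directly off the series expansions. In the Bessel-type expansion \eqref{spherical} the spectral parameter enters only through the factors $\mathcal{J}_{\frac{n-2}{2}+m}(\lambda t)$; since $\mathcal{J}_\mu(z)$ is a constant multiple of $J_\mu(z)/z^\mu$, whose power series contains only even powers of $z$, every summand is invariant under $\lambda\mapsto-\lambda$. Equivalently, and without the restriction $0\le t\le R_0$, the Harish--Chandra series \eqref{Harish-Chandraseries} is manifestly symmetric under $\lambda\mapsto-\lambda$, the substitution merely interchanging its two terms. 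Because $G$ has real rank one, a $K$-biinvariant function is determined by its restriction to $\{\exp tH_0:t\ge 0\}$, so equality on this set forces $\phi_\lambda=\phi_{-\lambda}$ throughout $\X$. Since $\phi_\lambda(x)=\int_K e^{-(i\lambda+\rho)H(x^{-1}k)}\,dk$ is an integral over the compact group $K$ of functions entire in $\lambda$, it is entire in $\lambda$ for each fixed $x$; the evenness just proved for real $\lambda$ therefore propagates to all $\lambda\in\C$ by the identity theorem, a fact I will reuse in (2).

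For property (2), I would start from the same integral representation. For real $\lambda$ the factor $e^{-i\lambda H(x^{-1}k)}$ has modulus one, so the triangle inequality gives $|\phi_\lambda(x)|\le\int_K e^{-\rho H(x^{-1}k)}\,dk$. Regarding the right-hand side as an integral against the normalized Haar measure on $K$ (normalization being consistent with $\phi_\lambda(o)=\int_K dk=1$) and applying the Cauchy--Schwarz inequality bounds it by $\left(\int_K e^{-2\rho H(x^{-1}k)}\,dk\right)^{1/2}$. The last integral is precisely $\phi_{-i\rho}(x)$; by the evenness from part (1) extended to $\lambda=i\rho$, together with the trivial evaluation $\phi_{i\rho}(x)=\int_K e^{0}\,dk=1$, one has $\phi_{-i\rho}\equiv 1$. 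Chaining the three estimates yields $|\phi_\lambda(x)|\le 1$.

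The only step beyond elementary inequalities is the normalization $\int_K e^{-2\rho H(x^{-1}k)}\,dk=1$, equivalently that the constant function is the spherical function at the distinguished parameter $\lambda=i\rho$; this is where I expect the genuine content to sit, and I would justify it exactly by the analytic continuation of the evenness indicated above (or, if preferred, simply cite $\phi_{i\rho}\equiv 1$ as a standard fact). As an alternative derivation of (2), I would remark that for real $\lambda$ the spherical function $\phi_\lambda$ is positive definite on $G$, which immediately gives $|\phi_\lambda(x)|\le\phi_\lambda(e)=1$; the Cauchy--Schwarz argument above is a self-contained substitute for invoking that theorem, which is why I would present it as the primary route.
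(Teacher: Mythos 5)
Your proof is correct, but note that the paper itself offers no proof of this lemma at all: it is stated as standard background on spherical functions (the properties go back to Harish--Chandra and can be found, e.g., in Helgason's book cited as \cite{He}), so there is nothing in the text to compare against line by line. What you supply is essentially the classical argument, and it holds together. For (1), both of your routes work: the Bessel-type expansion \eqref{spherical} is even in $\lambda$ because $\mathcal{J}_\mu(z)=c_\mu J_\mu(z)/z^\mu$ has a power series in $z^2$, and the Harish--Chandra series \eqref{Harish-Chandraseries} is manifestly swapped into itself under $\lambda\mapsto-\lambda$ (for the latter you should exclude $\lambda=0$, where $c(\lambda)$ has a pole; evenness at $\lambda=0$ is trivial). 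Your appeal to the Cartan decomposition to pass from $\{\exp tH_0: t\ge 0\}$ to all of $\X$ is exactly right in rank one. For (2), the chain $|\phi_\lambda(x)|\le\int_K e^{-\rho H(x^{-1}k)}\,dk\le\bigl(\int_K e^{-2\rho H(x^{-1}k)}\,dk\bigr)^{1/2}=\phi_{-i\rho}(x)^{1/2}=1$ is the standard proof, and your identification of the key normalization $\int_K e^{-2\rho H(x^{-1}k)}\,dk=1$ with the evaluation $\phi_{\pm i\rho}\equiv 1$ (via evenness at the complex parameter, propagated by the identity theorem since $\lambda\mapsto\phi_\lambda(x)$ is entire by compactness of $K$ and Morera's theorem) is where the content sits, as you say. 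A small streamlining worth knowing: you can avoid the series expansions entirely for (1) by observing that $\phi_\lambda$ and $\phi_{-\lambda}$ are both smooth $K$-biinvariant eigenfunctions of $\Delta$ with the same eigenvalue $-(\lambda^2+\rho^2)$ and the same normalization $\phi(o)=1$, and such a spherical eigenfunction is unique; this gives $\phi_\lambda=\phi_{-\lambda}$ for all $\lambda\in\C$ at once, which feeds directly into your step (2) without any analytic continuation. Your closing remark that positive definiteness of $\phi_\lambda$ for real $\lambda$ yields $|\phi_\lambda|\le\phi_\lambda(e)=1$ is also a valid alternative.
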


We conclude this section by recalling the defintion of \textbf{Sobolev-spaces} given in \cite{An}. For $s \in \mathbb{R},$ we define $\strut H^s(\mathbb{X})$ as the image of $\strut L^2(\mathbb{X})$ under $(- \Delta)^{-\frac{s}{2}}$, equipped with the norm
$$\|f\|_{\strut H^s}= \|(- \Delta)^{\frac{s}{2}} f\|_{\strut L^2}=\int_{0}^{\infty} ({\lambda}^2 + {\rho}^2)^s |\hat{f}(\lambda)|^2 d\lambda.$$

\section{Proof of Theorem A and Theorem B}

In this section, we first  prove the Theorem B. We choose $\alpha_0 \in C_c^{\infty}(\mathbb{X})$, a radial function  and $\psi_0 \in C_c^{\infty}(\mathbb{R})$, an even function  such that both are real cut-off functions, and $\alpha_0 \equiv 1$ in a ball $B$ contained in the support of $\alpha_0,$ $\psi_0 \equiv 1\; \text{in}\; [0,1].$ Now set 
\begin{equation}\label{Sfdefinition}
 Sf(x,t)  =\alpha_0(x)\psi_0(t) S_tf(x) = c\;\alpha_0(x)\psi_0(t) \int_{0}^{\infty} \widehat{f}(\lambda) e^{it(|\lambda|^2+|\rho|^2)^{\frac{a}{2}}} \phi_{\lambda}(x) |c(\lambda)|^{-2}d\lambda. 
\end{equation}
We define \be\|Sf\|_{\strut L^2(\strut H_s)} = \left(\int_{\mathbb{X}} \|Sf(x,.)\|_{\strut H_s}^2 dx\right)^{\frac{1}{2}}.\label{mixednormdef}\ee
Our plan is as follows: By Plancheral theorem, we have
$$    \|Sf\|_{\strut L^2(\strut H^0)} \leq \|f\|_{\strut H^{-s}(\mathbb{X})},\;\;\;\; \text{and}\;\;\; 
\|Sf\|_{\strut L^2(\strut H^1)} \leq \|f\|_{\strut H^{-s+a}(\mathbb{X})}.
$$
Interpolating between these two, yields
$$\|Sf\|_{\strut L^2(\strut H^r)} \leq \|f\|_{\strut H^{-s+ra}(\mathbb{X})},~0\leq r \leq 1,$$
and this, together with Sobolev imbedding further gives:
$$\|\sup_{0\leq t\leq 1}S_tf(x) \|_{\strut L^2(\mathbb{X})} \leq \|f\|_{\strut H^{s}(\mathbb{X})},~s > \frac{1}{2}.$$

First we shall prove that
\be \|Sf\|_{\strut L^2(\mathbb{X} \times \mathbb{R})} \leq C\|f\|_{{\mathbb{H}}^{-s}(\mathbb{X})}\; ,\; s=\frac{a-1}{2}. \label{firstL2estimate}\ee
 We have, 
  
 \begin{align*}
  &\int_{X}\int_{\mathbb{R}} |Sf(x,t)|^2 dx dt  \nonumber \\ 
  & =c \int_{\mathbb{X}}\int_{\mathbb{R}} \alpha_0(x)^2 \psi_0(t)^2 \left(\int_{0}^{\infty} \widehat{f}(\lambda) e^{it({\lambda}^2 +{\rho}^2)^{\frac{a}{2}}} \phi_{\lambda}(x) |c(\lambda)|^{-2} d\lambda \right) \Big(\int_{0}^{\infty} \bar{\widehat{f}(\eta)} e^{-it({\eta}^2 +{\rho}^2)^{\frac{a}{2}}} \phi_{\eta}(x) |c(\eta)|^{-2} d\eta \Big) dx dt \nonumber \\ 
 & =c\int_{0}^{\infty} \int_{0}^{\infty}\left( \int_{\mathbb{X}} \alpha_0(x)^2\phi_{\lambda}(x)\phi_{\eta}(x) dx \right) \left(\int_{\mathbb{R}}\psi (t)e^{-it(({\eta}^2 +{\rho}^2)^{\frac{a}{2}}-({\lambda}^2 +{\rho}^2)^{\frac{a}{2}})}  dt\right) \widehat{f}(\lambda) \bar{\widehat{f}(\eta)}|c(\lambda)|^{-2} |c(\eta)|^{-2} d\lambda d\eta \nonumber\\  
& =c \int_{0}^{\infty} \int_{0}^{\infty} \left( \int_{\mathbb{X}} \alpha(x)\phi_{\lambda}(x)\phi_{\eta}(x) dx \right) \hat{\psi}(({\eta}^2 +{\rho}^2)^{\frac{a}{2}}-({\lambda}^2 +{\rho}^2)^{\frac{a}{2}})) \widehat{f}(\lambda) \bar{\widehat{f}(\eta)}|c(\lambda)|^{-2} |c(\eta)|^{-2} d\lambda d\eta, 
\end{align*}
In the above expression, we put  $\alpha={\alpha_0}^2 $ , $\psi={\psi_0}^2.$  
For $\lambda,\eta \in (0,\infty),$ we define:
\be K(\lambda,\eta)= ({\rho}^2+|\lambda|)^s  ({\rho}^2+|\eta|)^s \Big ( \int_{\mathbb{X}} \alpha(x)\phi_{\lambda}(x)\phi_{\eta}(x) dx \Big)\hat{\psi}(({\eta}^2 +{\rho}^2)^{\frac{a}{2}}-({\lambda}^2 +{\rho}^2)^{\frac{a}{2}})).\label{k-function}\ee
Define,
$$Tf(\lambda)=\int_{\mathbb{R}} K(\lambda,\eta) f(\eta) |c(\eta)|^{-2} d\eta.$$
If we set $h(\lambda)=\hat{f}(\lambda)(|\rho|^2 + |\lambda|)^{-s}$, then
 \begin{align*} 	
 	\int_{X}\int_{\mathbb{R}} |Sf(x,t)|^2 dx dt & = \int_{0}^{\infty} \int_{0}^{\infty} K(\lambda,\eta) (|\rho|^2 + |\lambda|)^{-s} (|\rho|^2 + |\eta|)^{-s} \hat{f}(\lambda) \bar{\widehat{f}(\eta)} |c(\lambda)|^{-2} |c(\eta)|^{-2} d\lambda d\eta \nonumber \\
 	& = \int_{0}^{\infty} \Big( \int_{0}^{\infty} K(\lambda,\eta) \bar{h(\eta)} |c(\eta)|^{-2} d\eta \Big) h(\lambda) |c(\lambda)|^{-2} d\lambda \nonumber\\
 	& = \int_{0}^{\infty} T(\bar{h(\lambda)}) h(\lambda) |c(\lambda)|^{-2} d\lambda.	
 \end{align*}
If $T$ is bounded operator on $\strut L^2 (\mathbb{R},|c(\lambda)|^{-2} d\lambda)$, then by H\"olders inequality we have,
$$\int_{X}\int_{\mathbb{R}} |Sf(x,t)|^2 dx dt \leq \|T\bar{h}\|_2 \|h\|_2 \leq  C\|h\|_2 ^2 \leq \|f\|_{\strut H^{-s}(\mathbb{X})}.$$
Now, to show $T$ is bounded on $\strut L^2 (\mathbb{R},|c(\lambda)|^{-2} d\lambda),$  it is  sufficient to show that there exists a constant $C>0$ such that  \be \int_{0}^{\infty} |K(\lambda,\eta)| |c(\lambda)|^{-2} d\lambda \leq C \;\; ,\forall  \eta \in (0,\infty), \label{firstestimate}\ee
and \be \int_{0}^{\infty} |K(\lambda,\eta)| |c(\eta)|^{-2} d\eta \leq C \;\; .\forall  \lambda  \in (0,\infty). \label{secondestimate}\ee
Due to the symmetry in the expression for $K(\lambda,\eta)$ , it is sufficient to prove \eqref{firstestimate}, and \eqref{secondestimate} will follow. Performing a change of variables $u=({\lambda}^2 + {\rho}^2)^{\frac{a}{2}}$, and $b=({\eta}^2 +{\rho}^2)^{\frac{a}{2}},$ using the estimate of $ c(\lambda)$ and the fact that  $\rho\geq 1$, and by boundedness of $\phi_\lambda$ the \eqref{firstestimate} becomes
\begin{align}& \int_{0}^{\infty} |K(\lambda,\eta)| |c(\lambda)|^{-2} d\lambda \nonumber  \\ &\leq c \displaystyle \int_{{\rho}^a}^{\infty} |\hat{\psi}(u-b)|\left(\int_{\mathbb{X}} \alpha_0(x)^2\phi_{\lambda}(x)\phi_{\eta}(x) dx\right) 
\big( 1+\sqrt{b^{\frac{2}{a}}-{\rho}^2}\big)^s
\big(1+\sqrt{u^{\frac{2}{a}}-{\rho}^2}\big)^{s+m_1+m_2 -1} u^{\frac{2}{a} -1} du. \label{thirdestimate}
\end{align} 
We now estimate $K(\lambda,\eta)$ into two cases:
\subsection*{Case I\;(  $\eta \leq 2$)\;:}
Given that $\eta \leq 2$, it follows that $b \leq (4 + \rho^2)^{\frac{a}{2}}$. The integral in \eqref{thirdestimate} can be expressed as 
\begin{align*}  
\int_{0}^{\infty} |K(\lambda,\eta)| |c(\lambda)|^{-2} d\lambda &\leq \int_{{\rho}^a}^{{\rho}^a + \alpha} |\hat{\psi}(u-b)|u^{\frac{s+m_1+m_2+1-a}{a}} du + \int_{{\rho}^a + \alpha}^{\infty} |\hat{\psi}(u-b)|u^{\frac{s+m_1+m_2+1-a}{a}} du\\ \nonumber
&=I_1 + I_2.
\end{align*}
As $\hat{\psi}$ is rapidly decreasing  and  bounded, it is clear that $I_1 \leq C$. Also, for large positive integer $N$
$$I_2 \leq \int_{{\rho}^a + \alpha}^{\infty} \displaystyle\frac{u^{\gamma}}{u^N} du \leq C,$$ 
where $\gamma=\frac{s+m_1+m_2+1-a}{a}$.
 
\subsection*{Case II\;(  $\eta > 2$)\;:}
Note that for $\eta > 2,$ 
$\big( 1+\sqrt{b^{\frac{2}{a}}-{\rho}^2}\big)^s \leq Cb^{\frac sa}$ and   $b>(2^2+{\rho}^2)^{\frac{a}{2 }} >5^{\frac{a}{2}}$. Now by using these estimates in  \eqref{firstestimate} we get,

\begin{align*}
 & \int_{0}^{\infty} |K(\lambda,\eta)| |c(\lambda)|^{-2} d\lambda \nonumber\\
 \leq &Cb^{\frac{s}{a}} \int_{{\rho}^a}^{\infty }\left(\int_{\mathbb{X}} \alpha_0(x)^2\phi_{\lambda}(x)\phi_{\eta}(x) dx \right) \hat{\psi}(u-b)\left(1+\sqrt{u^{\frac{2}{a}}-{\rho}^2}\right)^{s+m_1+m_2-1} u^{\frac{2}{a} -1} du \nonumber \\
  =& C b^{\frac{s}{a}} \int_{{\rho}^a}^{(3+{\rho}^2)^{\frac{a}{2}} }\left(\int_{\mathbb{X}} \alpha_0(x)^2\phi_{\lambda}(x)\phi_{\eta}(x) dx\right) \hat{\psi}(u-b) \left(1+\sqrt{u^{\frac{2}{a}}-{\rho}^2}\right)^{s+m_1+m_2-1} u^{\frac{2}{a} -1} du \nonumber \\
&+ C b^{\frac{s}{a}} \int_{(3+{\rho}^2)^{\frac{a}{2}}}^{\frac{3b}{2} }\Big(\int_{\mathbb{X}} \alpha_0(x)^2\phi_{\lambda}(x)\phi_{\eta}(x) dx\Big) \hat{\psi}(u-b)\Big(1+\sqrt{u^{\frac{2}{a}}-{\rho}^2}\Big)^{s+m_1+m_2-1} u^{\frac{2}{a} -1} du\nonumber \\
&+C b^{\frac{s}{a}} \int_{\frac{3b}{2}}^{\infty }\Big(\int_{\mathbb{X}} \alpha_0(x)^2\phi_{\lambda}(x)\phi_{\eta}(x) dx\Big) \hat{\psi}(u-b)\Big(1+\sqrt{u^{\frac{2}{a}}-{\rho}^2}\Big)^{s+m_1+m_2-1} u^{\frac{2}{a} -1} du  \\
=& I_1 + I_2 + I_3.
\end{align*}
Now we will bound the expression $I_1,\; I_2$ and $I_3$. The boundedness of $I_1$ and $I_3$ is easy. Using the decay property of $\hat{\psi}$ and the boundedness of $\phi_{\lambda},$ for large positive integer $N,$ we have 
   \be
   	I_1  \leq C b^{\frac{s}{a}} \int_{{\rho}^a}^{(3+{\rho}^2)^{\frac{a}{2}}} \frac{1}{[b-(3+{\rho}^2)^{\frac{a}{2}}]^N}
   	\leq C\displaystyle \frac{b^{\frac{s}{a}}}{b^N} 
   	 \leq C, 
   	\ee\label{I1 estimate}
   and for $\beta = \frac{s+m_1+m_2+1}{a} ,$
   \be
   	I_3  \leq Cb^{\frac{s}{a}} \int_{\frac{3b}{2}}^{\infty} \frac{u^{\beta}}{|u-b|^{N+1}} du 
    \leq Cb^{\frac{s}{a}} \int_{\frac{3b}{2}}^{\infty} \frac{u^{\beta}}{u^N} du 
    \leq C.
   		\ee
    The main and crucial step is to bound the expression $I_2$. In this case we will use use bound of the elementary spherical $\phi_{\lambda}$ in $\lambda$ variable. Since the bounds of $\phi_{\lambda}(t)$ also depends on $t$, therefore we estimate $\phi_{\lambda}(t)$ into two parts.
   	
   	In first part, we assume $t>1$. Applying the estimate \eqref{a_2estimate} and \eqref{clambda} in \eqref{Harish-Chandraseries}, we have,
    $$|\phi_{\lambda}(t)| \leq  C|c(\lambda)| \leq C \frac{1}{|\lambda|(1+|\lambda|)^{\frac{m_1+m_2-2}{2}}}.$$
    Put $\alpha=m_1+m_2-1$, then for $\lambda=(u^{\frac{2}{a}} - {\rho}^2)^{\frac{\alpha + 1}{4}}$ we have,
\be |\phi_{\lambda}(t)| \leq \frac{C}{u^{\frac{\alpha + 1}{2a}}}.\label{lambdaestimates1}\ee
Similarly , we have 
    \[|\phi_{\eta} (t)| \leq  C\frac{1}{b^{\frac{\alpha + 1}{2a}}}.\]
    
  The estimate $\phi_\lambda,$ when $t\leq 1$, is given in the following lemma.
    \begin{Lemma} For  $t\leq 1$ and $\lambda >1$, we have:
    \be \phi_{\lambda}(t) \leq \displaystyle\frac{1}{(\lambda t)^{\frac{n-1}{2}}}.\label{lambdaestimates2}\ee       
    \end{Lemma}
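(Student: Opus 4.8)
The plan is to dispose of the easy regime first and concentrate on the genuine one. Observe that the claimed bound is interesting only when $\lambda t$ is large: if $\lambda t \leq 1$ then $(\lambda t)^{-(n-1)/2} \geq 1$, and since the previous Lemma gives the universal estimate $|\phi_\lambda(x)| \leq 1$, the inequality holds automatically. So I would assume $\lambda t > 1$ from now on; note that $t \leq 1 < R_0$, so the finite-order expansion \eqref{spherical 1} is available in this range.

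In that range I would apply \eqref{spherical 1} with $M=0$ and $a_0(t)=1$, writing
\[
\phi_\lambda(\exp tH_0) = c_0\left[\frac{t^{n-1}}{D(t)}\right]^{1/2}\mathcal{J}_{\frac{n-2}{2}}(\lambda t) + E_1(\lambda t).
\]
The prefactor is harmless on $(0,1]$: since $\dfrac{t^{n-1}}{D(t)} = \left(\dfrac{t}{\sinh t}\right)^{m_1}\left(\dfrac{t}{\sinh 2t}\right)^{m_2}$ extends continuously to a positive value as $t\to 0^+$ and remains positive for $t\in(0,1]$, it is bounded there, whence $\left[\frac{t^{n-1}}{D(t)}\right]^{1/2}\leq C$. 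For the Bessel factor I would use the large-argument bound $|J_\mu(z)|\leq C_\mu\, z^{-1/2}$ valid for all $z\geq 1$ (continuity on compact subsets plus the classical asymptotics of $J_\mu$). Recalling $\mathcal{J}_\mu(z)=\frac{J_\mu(z)}{z^\mu}\Gamma(\mu+\tfrac12)\Gamma(\tfrac12)2^{\mu-1}$ with $\mu=\frac{n-2}{2}$, and that $\mu+\tfrac12=\frac{n-1}{2}$, this yields $\bigl|\mathcal{J}_{\frac{n-2}{2}}(\lambda t)\bigr|\leq C(\lambda t)^{-\frac{n-1}{2}}$ when $\lambda t>1$.

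For the remainder I would use the second branch of \eqref{localestimate}: since $\lambda t>1$ and $t\leq 1$,
\[
|E_1(\lambda t)| \leq c_0\, t^{2}\,(\lambda t)^{-(\frac{n-1}{2}+1)} \leq c_0\,(\lambda t)^{-\frac{n-1}{2}},
\]
where I have absorbed the spare factor $(\lambda t)^{-1}\leq 1$. Adding the two contributions gives $|\phi_\lambda(\exp tH_0)|\leq C(\lambda t)^{-\frac{n-1}{2}}$, which is the assertion. The only substantive inputs are the $O(z^{-1/2})$ decay of $J_{\frac{n-2}{2}}$ and the boundedness of the prefactor, both standard; I therefore expect no real obstacle. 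The single point deserving attention is the coupling between "$t$ small" and "$\lambda t>1$": the error bound in \eqref{localestimate} is useful precisely because its extra power $t^{2(M+1)}$ comes paired with a gain $(\lambda t)^{-(M+1)}$, and already $M=0$ produces the one spare factor $(\lambda t)^{-1}$ needed to fold the remainder into the main term, so one never has to sum the series or track constants in $m$.
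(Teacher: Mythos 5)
Your proof is correct and follows essentially the same route as the paper: in the main regime $\lambda t>1$ you use the finite expansion \eqref{spherical 1} with $M=0$, the decay $|\mathcal{J}_{\frac{n-2}{2}}(z)|\leq Cz^{-\frac{n-1}{2}}$ for $z\geq 1$, and the second branch of \eqref{localestimate} with the spare factor $t^{2}(\lambda t)^{-1}\leq 1$, exactly as the paper does. The only (harmless) divergence is in the regime $\lambda t\leq 1$, where you simply invoke $|\phi_\lambda|\leq 1$ together with $(\lambda t)^{-\frac{n-1}{2}}\geq 1$, which is cleaner than the paper's re-derivation from the series expansion and gives the same conclusion.
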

    \begin{proof}
     First assume $\lambda t \leq 1,$ that is   $\lambda > 1$ and, $t < \frac{1}{\lambda} < 1.$ Using the fact $|{\mathcal{J}}_{\mu}(t)| \leq C$ for all $\mu$ and for all $t \leq 1$ and the bound of $a_m$, we obtain from \eqref{spherical}:
    \[|\phi_{\lambda}(t)| \leq \frac{c}{(D(t))^{\frac{1}{2}}} \frac{1}{{\lambda}^{\frac{n-1}{2}}} \sum_{m=0}^{\infty} t^{2m} \frac{1}{R_1} < \frac{C}{(\lambda t)^{\frac{n-1}{2}}}.\]
   Next consider the case, $\lambda t \geq 1$. As $D(t)\asymp t^{n-1}$, thus by using \eqref{spherical 1}, we can write \[|\phi_{\lambda}(t)| \leq \left(\frac{t^{n-1}}{D(t)}\right)^{\frac{1}{2}} \left[{\mathcal{J}}_{\frac{n-2}{2}} (\lambda t) + E_1(\lambda t)\right] \leq C \left[{\mathcal{J}}_{\frac{n-2}{2}} (\lambda t) + E_1(\lambda t)\right].\]
   From a estimate of the Bessel function, we have $|{\mathcal{J}}_{\mu}(t)| \leq \displaystyle\frac{\Gamma(\mu + \frac{1}{2}) \Gamma (\frac 12) 2^{\mu -1}}{t^{\mu + \frac 12}},$ (for $\mu > - \frac 12$ and $t \geq 1$). Now by using the  estimate of $E_M$ given in \eqref{localestimate}, we get \[|\phi_{\lambda}(t)| \leq \frac{C}{(\lambda t)^{\frac{n-1}{2}}}.\]
       \end{proof}
      To prove the boundedness of $I_2$, we first bound the  following integral:  
      $$\displaystyle \int_{\mathbb{X}} \alpha_0(x) \phi_{\lambda}(x) \phi_{\eta}(x) dx = \int^\infty_{0} \alpha_0(s) \phi_{\lambda}(s) \phi_{\eta}(s) D(s)ds. $$
   	Now by using estimates \eqref{lambdaestimates1} and \eqref{lambdaestimates2}, we obtain,
   	  \begin{align*}
   	  	\left|\int_{\mathbb{X}} \alpha_0(x) \phi_{\lambda}(x) \phi_{\eta}(x) dx\right| & \leq \left|\int_{s>1} \alpha_0(s) \phi_{\lambda}(s) \phi_{\eta}(s) D(s)ds\right| + \left|\int_{s\leq1} \alpha_0(s) \phi_{\lambda}(s) \phi_{\eta}(s) D(s)ds\right| \\ \nonumber
   	  	& \leq \frac{C}{u^{\frac{\alpha +1}{2a}}  b^{\frac{\alpha +1}{2a}}} \left( \int_{s>1} |\alpha_0(s)|  |D(s)|ds + \int_{s \leq 1} |\alpha_0(s)| \frac{D(s)}{s^{n-1}} ds \right)\\ \nonumber
   	  	& \leq \frac{C}{u^{\frac{\alpha +1}{2a}}  b^{\frac{\alpha +1}{2a}}}.
   	  	\end{align*}
     	Finally using above bound we get,
     	\begin{align*}
     		I_2 & \leq b^{\frac{s}{a}} \int_{(3+{\rho}^2)^{\frac{a}{2}}}^{\frac{3b}{2}} |\hat{\psi}(u-b)| \;\; \frac{u^{\frac{s}{a} + \frac{\alpha}{a} + \frac{2}{a} -1}}{u^{\frac{\alpha +1}{2a}} b^{\frac{\alpha +1}{2a}}} du \\ \nonumber
     		& \leq C b^{\frac{s}{a}} \int_{(3+{\rho}^2)^{\frac{a}{2}}}^{\frac{3b}{2}} |\hat{\psi}(u-b)|\;\; \frac{u^{\frac{s}{a} + \frac{1}{a} -1}}{u^{\frac{\alpha +1}{2a}} b^{\frac{\alpha +1}{2a}}} du \\ \nonumber
     		& \leq  C b^{\frac{s}{a}} \int_{(3+{\rho}^2)^{\frac{a}{2}}}^{\frac{3b}{2}} |\hat{\psi}(u-b)| b^{\frac{s}{a} + \frac{1}{a} -1} du \\ \nonumber
     		& \leq C b^{\frac{2s}{a} + \frac{1}{a} -1} \int_{(3+{\rho}^2)^{\frac{a}{2}}}^{\frac{3b}{2}} |\hat{\psi}(u-b)| du  \leq C.
     		\end{align*}
     	
     	This proves \eqref{firstestimate}, and \eqref{firstL2estimate} follows.
      By using the definition in \eqref{mixednormdef}, the inequality in \eqref{firstL2estimate} can be written as:
      \be \|Sf\|_{\strut L^2(\strut H^0)} \leq \|f\|_{\strut H^{-s} (\mathbb{X})}.\label{Sfestimate}\ee
     
     Differentiating $Sf(x,t)$ with respect to t-variable, we have
     	\begin{align}
     	\frac{\partial}{\partial t} Sf(x,t) &= \alpha_0(x) \psi_0(t)\int_{\mathbb{X}} i({\lambda}^2 +{\rho}^2)^{\frac{a}{2}} e^{it({\lambda}^2 +{\rho}^2)^{\frac{a}{2}}} \hat{f}(\lambda) \phi_{\lambda}(x) |c(\lambda)|^{-2} d\lambda \\ \nonumber
     	& + \alpha_0(x) {\psi_0}^{\prime}(t) \int_{\mathbb{X}} e^{it({\lambda}^2 +{\rho}^2)^{\frac{a}{2}}} \hat{f}(\lambda) \phi_{\lambda}(x) |c(\lambda)|^{-2} d\lambda \\ \nonumber
     	 &= S_1f (x,t) + S_2f (x,t).
     	\end{align}
Now by equivalence of norms in sobolev spaces (see \cite[lemma 3, p 136]{ste}), we have
     	
     	\begin{align*}
     		\|Sf(x,.)\|_{\strut H^1(\mathbb{R})} &\leq C \left(\|Sf(x,.)\|_{\strut H^0(\mathbb{R})} + \|\frac{\partial}{\partial t} Sf(x,.)\|_{\strut H^0(\mathbb{R})}\right) \\ \nonumber
     		&\leq C \left( \|Sf(x,.)\|_{\strut H^0(\mathbb{R})} + \|S_1f(x,.)\|_{\strut H^0(\mathbb{R})}+\|S_2f(x,.)\|_{\strut H^0(\mathbb{R})} \right).
     		\end{align*}
 Again, by triangle inequality we have,\begin{align}\label{L2-H1estimate}
     		\|Sf\|_{\strut L^2(\strut H^1)} &\leq C \left( \| Sf\|_{\strut L^2(\strut H^0)} + \| S_1f\|_{\strut L^2(\strut H^0)} +  \| S_2f\|_{\strut L^2(\strut H^0)}\right).\\ \nonumber
     		\end{align}
Note that, in the proof of \eqref{firstL2estimate}, if we substitute $h(\eta)=\hat{f}(\eta)({\rho}^2 + \eta)^{-s}({\eta}^2 +{\rho}^2)^{\frac{a}{2}},$ then
     	\be\label{Sf1estimate}
     		\|S_1f\|_{\strut L^2(\strut H^0)}^2 \leq \|h\|_2^2 
     		\leq C \int_{\mathbb{R}} |\hat{f}(\eta)|^2 ({\eta}^2 + {\rho}^2)^{-s+a} |c(\eta)|^{-2} d\eta = \|f\|_{H^{-s+a}(\mathbb{X})}^2.
     		\ee
 and  
     	\be \|S_2f\|_{\strut L^2(\strut H^0)} \leq \|f\|_{\strut H^{-s} (\mathbb{X})}.\label{Sf2estimate}\ee
 From \eqref{L2-H1estimate}, \eqref{Sf1estimate} and \eqref{Sf2estimate}, we conclude that
     	\be \|Sf\|_{\strut L^2(\strut H^1)} \leq \|f\|_{\strut H^{-s+a}(\mathbb{X})}. \label{secondL2estimate}\ee
Interpolation between \eqref{Sfestimate} and \eqref{secondL2estimate} contributes,
     	\be \|Sf\|_{\strut L^2(\strut H^r)} \leq C\|f\|_{\strut H^{-s+ra}(\mathbb{X})},\;\;\; 0\leq r \leq 1.\ee
 By taking $r=\frac{1}{2} + \epsilon$, we have
     	$$\|Sf\|_{\strut L^2\left(\strut H^{\frac{1}{2} + \epsilon}\right)} \leq C\|f\|_{\strut H^{\frac{1}{2} + a\epsilon}(\mathbb{X})},\;\; 0\leq \epsilon \leq \frac{1}{2}.$$
  As, $\strut H^s \hookrightarrow \strut L^{\infty}$, for $s>\frac{1}{2}$, it follows that
     	$$\|Sf\|_{\strut L^2(\strut L^{\infty})} \leq \|f\|_{\strut H^s}\;\; \text{for}\; s>\frac{1}{2}.$$
      Using the above inequality and the expression of $Sf(x,t)$ given in \eqref{Sfdefinition} we got,
     	 $$\left\|\sup_{0< t< 1} |S_t f(x)|\right\|_{\strut L^2( B)} \leq \|f\|_{\strut H^s(\mathbb{X})} \;\; \text{for}\; s>\frac{1}{2}.$$
       This proves the Theorem B.

 Now we will prove the  Theorem A.
 
 If $f \in C_c^{\infty}(\mathbb{X})$ then it can be easily shown that $\lim\limits_{t\rightarrow 0} S_tf(x)=f(x)$. Now suppose $f \in \strut H^s(\mathbb{X})$ and has compact support, then there exist a sequence $\{f_n\}$ in $C_c^{\infty}(\mathbb{X})$ such that $f_n \rightarrow f$ in $\strut H^s (\X)$ and support of all $f_n$ contained in a compact set. 
      
      Applying Fatou's lemma and the estimate \eqref{maximalestimate} we get
     \be\int_{B} |S^*f(x)|^2 dx \leq \lim\limits_{n \rightarrow \infty} \int_{B}|S^*f_n(x)|^2 dx \leq \lim\limits_{n \rightarrow \infty} C\|f_n\|_{\strut H^s}^2 = C \|f\|_{\strut H^s}^2\label{lastestimate},\ee for every ball $B$ in $\mathbb{X}$.
     Now, if $f \in \strut H^s(\mathbb{X})$ and has compact support and take $g$ from $C_c^{\infty}(\mathbb{X})$ , we have
     \begin{align*}
     	\lim\limits_{t \rightarrow 0} |S_tf(x)-f(x)| &= \lim\limits_{t \rightarrow 0} |S_t(f-g)(x) - (S_tg(x)-g(x))-(f(x)-g(x))|\\ \nonumber
     	&\leq S^*(f-g)(x) + |f(x)-g(x)| .
     \end{align*}
 Taking $\strut L^2$-norm on bothside and using \eqref{lastestimate}, one obtain
 $$ \int_{B}\Big(\lim\limits_{t \rightarrow 0} |S_tf(x)-f(x)| \Big)^2 dx \leq C\|f-g\|_{\strut H^s}^2.$$
 By density, we can choose $g,$ such that $\|f-g\|_{\strut H^s}$can be made arbitrarily small, consequently 
 $$\lim\limits_{t\rightarrow 0} |S_tf(x)-f(x)|=0.$$
 This complete proof of the Theorem A.

%%%%%%%%%%%%%%%%%%%%%%%%%%%%%%%%%%%%%%%%%%%%%%%%%%%%%%%%%%%%%%%%%%%%%%%%%%%%%%%%%%%%%%%%%%%%%%%%%%%%%%%%%%%%%%%%%%%%%%%%%%%%%%%%%%%%%%%%

%%%%%%%%%%%%%%%%%%%%%%%%%%%%%%%%
 
\end{document}